
\documentclass[letterpaper, 10 pt, conference]{ieeeconf}  

\IEEEoverridecommandlockouts                              

\overrideIEEEmargins                                      

\usepackage{amsthm}
\usepackage{amsmath}
\usepackage{amssymb}
\usepackage{color}
\usepackage{graphicx}

\usepackage[linesnumbered,lined,ruled,commentsnumbered]{algorithm2e}
\RestyleAlgo{ruled}
\SetKwComment{Comment}{// }{}

\DeclareMathOperator*{\argmax}{arg\,max}

\newtheorem{theorem}{Theorem}
	\newtheorem{prop}[theorem]{Proposition}
	\newtheorem{lemma}[theorem]{Lemma}

	\theoremstyle{definition}
	\newtheorem{definition}[theorem]{Definition}
	\newtheorem{assumption}[theorem]{Assumption}
	\newtheorem{remark}[theorem]{Remark}

	\newcommand{\R}{\mathbb{R}}
	
	\newcommand{\U}{\mathcal{U}}

    \newcommand{\KK}{\mathbf{K}}


	\newcommand{\A}{\mathcal{A}}

	

	\newcommand{\eps}{\varepsilon}



\makeatletter
\let\NAT@parse\undefined
\makeatother
\usepackage[linkcolor=blue]{hyperref}
\hypersetup{
	colorlinks   = true, 
	citecolor   = red 
}

\title{\LARGE \bf
A minimax optimal control approach for robust neural ODEs
}

\author{Cristina Cipriani$^{1}$, Alessandro Scagliotti$^{2}$ and Tobias W\"ohrer$^{3}$
\thanks{$^{1,2,3}$Technical University Munich (TUM), Department of Mathematics, Munich, Germany.}
\thanks{$^{1}$Munich Data Science Institute (MDSI), Munich, Germany}
\thanks{$^{1,2}$Munich Center for Machine Learning (MCML), Munich, Germany
\textit{cristina.cipriani@ma.tum.de, scag@ma.tum.de, tobias.woehrer@tum.de}}
}

\begin{document}

\maketitle
\thispagestyle{empty}
\pagestyle{empty}

\begin{abstract}
In this paper, we address the adversarial training of neural ODEs from a robust control perspective.
This is an alternative to the classical training via empirical risk minimization, and it is widely used to enforce reliable outcomes for input perturbations.
Neural ODEs allow the interpretation of deep neural networks as discretizations of control systems, unlocking powerful tools from control theory for the development and the understanding of machine learning.
In this specific case, we formulate the adversarial training with perturbed data as a minimax optimal control problem, for which we derive first order optimality conditions in the form of Pontryagin's Maximum Principle. We provide a novel interpretation of robust training leading to an alternative weighted technique, which we test on a low-dimensional classification task.
\end{abstract}

\section{INTRODUCTION}
The recent advances in deep neural networks (DNNs) combined with the rapidly increasing supply of computing power have lead to numerous breakthroughs in science and technology \cite{hassabis2021}. DNN algorithms have shown to possess remarkable data generalization properties and flexible application settings \cite{resnet2016, Good2016}. Nonetheless, it has become evident that many machine learning (ML) models suffer from a severe lack of robustness against data manipulation. \emph{Adversarial attacks} describe inputs with minuscule changes, e.g.\ pictures with added pixel noise that is invisible in low resolution, which result in dramatic changes in the model outputs \cite{szegedy2013, Good2015}. Such attack vulnerabilities limit the implementations of ML algorithms in areas that demand high reliability, such as self-driving vehicles or automated content filtering. The robustness crisis further highlights the general lack of theoretical understanding and interpretability of neural network algorithms.

A central theoretical achievement of recent years is the interpretation of DNNs, such as residual NNs, as dynamical systems \cite{weinan2017, haber2017}: The input-to-output information flow through a network with an infinite amount of layers can be formulated in the continuum limit. This leads to nonlinear \emph{neural ODEs} (nODE) \cite{CRBD18, dupont2019}, where time takes the role of the continuous-depth variable. The nODE vantage point has proven to be a powerful tool that allows to interpret the learning problems (or parameter training) of DNNs as continuous-time control problems and to formulate \emph{Pontryagin's Maximum Principle} (PMP) \cite{pont62} in various network settings \cite{LCCE19, CRBD18, doubletrillos}. The link between the training of DNNs with batches of input and simultaneous control led to numerous additional theoretical insights into neural networks \cite{CFS_23, EGPZ, ruiz2023neural}.

In the context of nODEs, an adversarial attack in predefined norm $\|\cdot\|$ of \emph{adversarial budget} $\eps>0$ takes the form of a perturbed initial data point $\tilde{x}^0= x^0 + \alpha$ where $\|\alpha\|\leq \eps$. As a result, the \emph{adversarially robust learning problem} has been established \cite{madry2018, shaham18} as the minimax control problem
\begin{equation}\label{eq:roboptim}
	\min_u  \mathbb{E}_{(x^0,y)\sim\mu} \left[\max_{\|\alpha\|\leq \eps} \operatorname{Loss}(u, x^0 + \alpha, y) \right]
\end{equation}
where the initial data and labels $(x^0,y)$ are drawn from an underlying (and generally unknown) data distribution $\mu$ and where the function $\operatorname{Loss}(u,x^0,y)$ quantifies the prediction accuracy resulting from the model control $u$. While such saddle-point problems have a long history \cite{wald45}, their non-smooth, non-convex nature pose challenges in obtaining theoretical result. Additionally, the inner maximization in \eqref{eq:roboptim} is $x^0$ dependent and in applications often high-dimensional. Numerical implementations are therefore typically substituting the maximization by first-order guesses of worst-case adversarial attacks \cite{Good2015, madry2018, tramer2017transfer, frank2022existence}. 

\emph{Contributions:}
In this work, we interpret the forward flow of DNNs as nonlinear neural ODEs and take a control theoretical approach. We investigate the non-smooth and non-convex robust learning problem \eqref{eq:roboptim} by formulating the corresponding PMP for a finite ensemble of data points. We provide a proof of this classical result by means of separation of Boltyanski approximating cones. In spite of being a well-known result, the novelty consists in relating the solution of the resulting PMP to the one of another smooth and properly weighted control problem. Inspired by this interpretation, we develop a numerical method to address the robust learning problem, and we test it on a bidimensional classification task.
\section{Robust control model} \label{sec:model}
In this paper, we consider the simultaneous and robust control problem of a finite number of particles in $\R^d$, whose initial positions are subject to perturbations (also known as \emph{attacks} in the machine learning literature).
Namely, the control system driving the dynamics of a single particle is:
\begin{equation}\label{eq:nODE}
\begin{cases}
		\dot{x}(t) = F(x(t),u(t)), & \mbox{a.e. } t\in[0,T],\\
		x(0) = x^0,
\end{cases}
\end{equation}
where $x^0\in \R^d$ represents the unperturbed initial datum, $F:\R^d \times \R^m\to \R^d$ is a continuous function (with Lipschitz dependence in the first variable), and $u\in \U:= L^2([0,T],U)$ is an admissible control taking values in any compact set $U\subset \R^m$. 
Here, we consider a collection (or \emph{batch}) of $M$ initial data points $\{x^0_1, \ldots, x^0_M\}$ for the system \eqref{eq:nODE}, which are \emph{simultaneously} driven by the same admissible control $u\in \U$.
In addition, each of these Cauchy data points is affected by $N$  additive perturbations, i.e.,
\begin{equation*}
    \A_i^N := \{ \alpha_i^1, \ldots, \alpha^N_i\}, \quad \forall i=1, \ldots, M,
\end{equation*}
where $\alpha_i^j \in \R^d$ ($j=1,\ldots,N$) is the $j$-th perturbation of $x^0_i$. We denote by $x_i^j:[0,T]\to\R^d$ the solution of \eqref{eq:nODE} corresponding to the initial condition $x^j_i(0)= x_i^0 + \alpha_i^j$, while we refer to the collection of the $N$ perturbations of the $i-$th particle using $X_i(\cdot) := (x_i^1(\cdot), \ldots, x_i^N(\cdot))\in \R^{d\times N}$.
Finally, we employ the variable $X=(X_1,\ldots,X_M)\in (\R^{d\times N})^M$ to describe the states of the whole system, i.e., the ensemble of every perturbation of every particle.
We are interested in the minimization of the functional $J:\U\to \R$, defined as
\begin{equation}\label{eq:def_funct}
    \begin{split}
         &J(u):= \max_{\substack{j_1 = 1, \ldots, N \\ \cdots \\ j_M=1,\ldots,N} } \frac{1}{M} \sum_{i=1}^M g_i\big( x_i^{j_i}(T) \big) \\ 
        & \quad = \frac{1}{M} \sum_{i=1}^M \max_{j = 1, \ldots, N} g_i\big(x_i^j(T)\big) = \frac{1}{M}\sum_{i=1}^M \tilde{g}_i(X_i(T)),
    \end{split}
\end{equation}
where $x_i^j(T)$ is the terminal-time value of the solution of \eqref{eq:nODE} starting from $x_i^0+\alpha_i^j$ and steered by the control $u\in \U$, while, for every $i=1,\ldots,M$,  $\tilde g_i:\R^{d\times N}\to \R$ is the non-smooth function defined as $\tilde g_i(X_i) = \max(g_i(x_i^1),\ldots,g_i(x_i^N))$, and $g_i:\R^d\to\R$ is a smooth convex function. Notice that for each datum $x_i$, we must consider various perturbations $\alpha_i^j$, depending on the initial datum. This motivates the need to compute the maximum over different indices $j_i$, which depend on the datum $i$. However, in the second identity of \eqref{eq:def_funct}, we exploit the fact that each perturbation is independent from the others to swap the sum and the maximization. \\
In machine learning, the minimization of \eqref{eq:def_funct} can be rephrased as the empirical loss minimization of the robust training problem~\eqref{eq:roboptim}, where $g_i$ incorporates the $i$-th data label, which has been denoted with the variable $y$ in \eqref{eq:roboptim}.
\begin{definition} \label{def:sl_minimizer}
Given $u\in \U$ and the corresponding collection of trajectories $X:[0,T]\to(\R^{d\times N})^M$ that solve \eqref{eq:nODE} with the perturbed initial data, we call the couple $(u,X)$ an \emph{admissible process}.
Moreover an admissible process $(\bar u, \bar X)$ is a \emph{strong local minimizer} for \eqref{eq:def_funct} if there exists an $\varepsilon>0$ such that, for every other admissible process $(u,X)$ satisfying $\|X(\cdot)-\bar X(\cdot)\|_{C^0} \leq \varepsilon$, we have
\begin{equation*}
		\frac{1}{M}\sum_{i=1}^M \tilde{g}_i(\bar X_i(T)) \leq \frac{1}{M} \sum_{i=1}^M \tilde{g}_i(X_i(T)).
\end{equation*}     
\end{definition}
Moreover, we further introduce 
\begin{equation} \label{eq:ext_node}
    \dot{X}(t) = \mathcal{F}(X(t),u(t)),  \quad \mbox{a.e. }
    t\in [0,T],
\end{equation}
to denote the evolution of the whole ensemble of trajectories in $(\R^{d\times N})^M$, where the mapping $\mathcal{F}:(\R^{d\times N})^M\times \R^m\to (\R^{d\times N})^M$ should be understood as the application of $F(\cdot, u)$ component-wise. This allows us to conveniently define the Hamiltonian $H:(\R^{d\times N})^M\times ((\R^{d\times N})^M)^\star \times \R^m \to \R$ as 
\begin{equation*}
\begin{split}
	H(X, P, u) : &= P \cdot \mathcal{F}(X, u)
	= \sum_{i=1}^M \sum_{j=1}^N p_i^j \cdot F\big(x_i^j,u\big),
\end{split}
\end{equation*}
where $P=(P_1,\ldots,P_M)$, $P_i=(p_i^1,\ldots,p_i^N)$, and $p_i^j\in \R^d$ for every $i=1,\ldots,M$ and every $j=1,\ldots,N$.

\subsection{Pontryagin Maximum Principle}
Below, we specify the hypotheses needed to formulate the necessary conditions for strong local minimizers of \eqref{eq:def_funct}.
\begin{assumption} 
We assume that
\begin{itemize}
    \item[(A1)] the function $F: \R^d \times \R^m \to \R^d$ that defines the dynamics in \eqref{eq:nODE} is continuous, and it is $C^2$-regular in the first variable. Moreover, there exists $C>0$ such~that
    \begin{equation*}
        |F(x,\omega)-F(y,\omega)|_2 \leq C(|\omega|_2 + 1)|x-y|_2,
    \end{equation*}
    for every $x,y \in \R^d$ and for every $\omega\in \R^m$;
    \item[(A2)] for every $i=1,\ldots,M$, the function $g_i: \R^d \to \R$ related to the terminal-cost in \eqref{eq:def_funct} is convex, $C^1$-regular, bounded from below and never equal to $+\infty$.
\end{itemize}
\end{assumption}
From now on, when we consider a convex function $f:\R^d\to\R$, we assume that it never attains the value $+\infty$. 
Let us now present the main result which motivates our numerical approach presented in Section~\ref{sec:numerics}.
\begin{theorem}[PMP for minimax]\label{thm:pmpfinite}
Let $(\bar u, \bar X)$ be a strong local minimizer for the minimax optimal control problem related to \eqref{eq:def_funct}. Then, there exist 
coefficients $(\gamma_i^j)_{i=1,\ldots,M}^{j=1,\ldots,N}$ satisfying $\gamma_i^j \geq 0$ and $\sum_{j=1}^N \gamma_i^j = 1 \quad \forall i=1, \ldots M$,
and there exists a curve $P: [0,T] \to \left((\R^{d \times N})^M\right)^\star$ 
whose components solve the adjoint equations
\begin{equation} \label{eq:pmp_covect}
\begin{cases}
\dot p_i^j(t) = p_i^j(t)  \nabla_x F\big( \bar x_i^j(t), \bar u(t) \big), & \mbox{a.e. } t\in [0,T],\\
 p_i^j(T) = -\frac1M \gamma_i^j  \nabla_x g_i \big(\bar x_i^j(T)\big),
\end{cases}
\end{equation}
such that, for a.e. $t \in [0,T]$, it holds that
\begin{equation} \label{eq:max_cond_pmp}
    H( \bar X(t), P(t), \bar u(t)) = \max_{\omega \in U} H(\bar X (t), P(t), \omega) .
\end{equation}
Moreover, for every $i=1,\ldots,M$, if $\gamma_i^j >0$, then
\begin{equation*}
    j \in \argmax_{j=1, \ldots, N} g_i \big(x_i^j(T)\big).
\end{equation*}
\end{theorem}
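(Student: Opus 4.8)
The plan is to exploit that the cost \eqref{eq:def_funct} depends on the trajectories only through their terminal values and, being a maximum of smooth convex functions (by (A2)), is itself convex in the endpoint: writing $\phi(X):=\frac1M\sum_{i=1}^M\tilde g_i(X_i)$, Definition~\ref{def:sl_minimizer} says that $\bar X(T)$ minimizes $\phi$ over the terminal points of admissible processes that stay within $\varepsilon$ of $\bar X$ in $C^0$; since every variation used below is a localized needle perturbation of $\bar u$, it keeps $\|X-\bar X\|_{C^0}$ as small as needed by (A1) and Gr\"onwall, so this localization is harmless. There are two natural ways to convert this endpoint minimality into the stated conditions: an epigraphic lift reducing the problem to the classical PMP with terminal state constraints, or a direct separation of Boltyanski approximating cones (the self-contained route). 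I would present the first as the quickest and keep the second as the alternative.

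\emph{Route (i): epigraphic lift.} Adjoin to \eqref{eq:nODE} the constant states $r_1,\dots,r_M\in\R$ with $\dot r_i=0$ and free initial value, impose the terminal inequality constraints $g_i(x_i^j(T))-r_i\le 0$ for all $i,j$, and minimize the Mayer cost $\frac1M\sum_i r_i(T)$; at a minimizer necessarily $\bar r_i=\tilde g_i(\bar X_i(T))$, so this is equivalent to \eqref{eq:def_funct}. The classical PMP with terminal constraints yields $\lambda_0\ge 0$, multipliers $\lambda_i^j\ge 0$ for the inequality constraints (not all zero), a covector $(P,\pi)$, complementary slackness $\lambda_i^j(g_i(\bar x_i^j(T))-\bar r_i)=0$, and the maximality of $H$ along $\bar u$. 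Since the dynamics ignore $r$, each $\pi_i$ is constant, and the free-initial-point transversality forces $\pi_i\equiv 0$; the terminal transversality in the $r_i$-coordinate then reads $0=-\tfrac{\lambda_0}{M}+\sum_j\lambda_i^j$, which forces $\lambda_0>0$ (otherwise all multipliers vanish) and $\sum_j\lambda_i^j=\lambda_0/M$. Normalizing $\lambda_0=1$ and setting $\gamma_i^j:=M\lambda_i^j$ gives $\gamma_i^j\ge 0$, $\sum_j\gamma_i^j=1$; the terminal transversality in the $x_i^j$-coordinates becomes $p_i^j(T)=-\tfrac1M\gamma_i^j\nabla_x g_i(\bar x_i^j(T))$, the adjoint and maximality conditions for the $x$-block reproduce \eqref{eq:pmp_covect}--\eqref{eq:max_cond_pmp}, and complementary slackness says $\gamma_i^j>0\Rightarrow g_i(\bar x_i^j(T))=\bar r_i=\max_j g_i(\bar x_i^j(T))$, i.e.\ $j\in\argmax_j g_i(\bar x_i^j(T))$.

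\emph{Route (ii): Boltyanski cones.} Let $\Phi(t,s)$ be the fundamental matrix of $\dot v=\nabla_X\mathcal{F}(\bar X(t),\bar u(t))v$ (well defined by (A1)), and let $K$ be the closed convex cone at $\bar X(T)$ generated by the single-needle endpoint-variation vectors $\Phi(T,s)\big(\mathcal{F}(\bar X(s),\omega)-\mathcal{F}(\bar X(s),\bar u(s))\big)$, $s$ a Lebesgue point, $\omega\in U$; multi-needle variations realize its convex combinations, so $K$ is a Boltyanski approximating cone of the reachable set. By convexity of $\phi$ the descent directions $D:=\{v:\phi'(\bar X(T);v)<0\}$ form an open convex cone, and the Boltyanski topological lemma turns endpoint minimality into $K\cap D=\varnothing$. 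Hahn--Banach then separates them: there is $\xi\neq 0$ with $\langle\xi,\cdot\rangle\le 0$ on $K$ and $\langle\xi,\cdot\rangle\ge 0$ on $\overline D$. Since $\phi'(\bar X(T);\cdot)$ is the support function of the compact convex set $\partial\phi(\bar X(T))$ (which, in the non-degenerate case, does not contain $0$), the bipolar theorem gives $-\xi\in\mathrm{cone}\,\partial\phi(\bar X(T))$, and after a positive rescaling we may take $-\xi\in\partial\phi(\bar X(T))$; set $P(T):=\xi$, so $-P(T)\in\partial\phi(\bar X(T))$ and $\langle P(T),\cdot\rangle\le 0$ on $K$. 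The subdifferential-of-a-max formula writes every element of $\partial\phi(\bar X(T))$ as $\tfrac1M(\gamma_i^j\nabla g_i(\bar x_i^j(T)))_{i,j}$ with $\gamma_i^j\ge 0$, $\sum_j\gamma_i^j=1$, and $\gamma_i^j=0$ whenever $g_i(\bar x_i^j(T))<\tilde g_i(\bar X_i(T))$; applied to $-P(T)$ this yields the coefficients, the terminal condition in \eqref{eq:pmp_covect}, and the last assertion. Setting $P(t):=\Phi(T,t)^\star P(T)$ produces a curve solving the adjoint equations \eqref{eq:pmp_covect}, and evaluating $\langle P(T),\cdot\rangle\le 0$ on the needle vectors at Lebesgue points $s$ gives $H(\bar X(s),P(s),\omega)\le H(\bar X(s),P(s),\bar u(s))$ for all $\omega\in U$, which is \eqref{eq:max_cond_pmp}. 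The degenerate case $D=\varnothing$ — equivalently $0\in\partial\phi(\bar X(T))$, i.e.\ for each $i$ some maximal index $j$ has $\nabla g_i(\bar x_i^j(T))=0$ — is settled by taking $\gamma_i^j=1$ at one such index per $i$ and $P\equiv 0$.

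I expect the main obstacle to be the step specific to each route. In Route (i) it is locating and correctly invoking the version of the PMP with terminal inequality constraints and free initial data and extracting the normalization $\lambda_0>0$, which fortunately is forced here by the $r$-transversality rather than requiring a constraint qualification. In Route (ii) it is the Boltyanski topological lemma: constructing the multi-needle control family that fills a neighborhood of an arbitrary finite subcone of $K$ inside the reachable set, and running the Brouwer-degree argument showing that $K\cap D\neq\varnothing$ would produce an admissible process with strictly smaller cost. The remaining ingredients — subdifferential calculus for the convex maximum, Hahn--Banach separation and the bipolar identity, and the needle-variation/adjoint bookkeeping — are routine, modulo the mild care of keeping all variations inside the $C^0$-neighborhood of Definition~\ref{def:sl_minimizer}.
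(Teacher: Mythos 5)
Your proposal is correct, but its primary route is genuinely different from the paper's proof. Route (i) — adjoining slack states $r_i$ with $\dot r_i=0$ and free initial value, imposing the terminal inequality constraints $g_i(x_i^j(T))\le r_i$, and invoking the classical PMP with endpoint constraints — is precisely the Vinter-style reduction that the paper mentions as possible (via \cite[Theorem~6.4.1]{book_vinter}) but deliberately avoids; your bookkeeping there is sound: the free-initial-value transversality kills $\pi_i$, the $r_i$-transversality forces $\sum_j\lambda_i^j=\lambda_0/M$ and hence normality (which is exactly the content of Remark~\ref{rmk:abnormals}), and complementary slackness delivers the support condition $\gamma_i^j>0\Rightarrow j\in\argmax_j g_i(\bar x_i^j(T))$. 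What this buys is a short proof by citation of a heavier black box; what the paper buys with its route is self-containedness: it lifts the cost to the epigraph (Proposition~\ref{prop:cone_epi}), separates Boltyanski approximating cones via the Abstract Maximum Principle (Theorem~\ref{thm:abstr_PMP}), and then uses von Neumann's minimax theorem together with Lemma~\ref{lem:max_rule} to extract a single $P^\star\in-\partial\Psi(\bar X(T))$ of the weighted form. Your Route (ii) is essentially this second argument, organized differently: instead of the epigraph lift plus abstract MP plus minimax swap, you separate the reachable-set cone from the open descent cone $D$ and use the bipolar theorem; the degenerate case $0\in\partial\phi(\bar X(T))$ you dispose of correctly.

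One caveat on Route (ii): treating the closed convex cone $K$ generated by \emph{all} needle vectors (over all Lebesgue points and all $\omega\in U$) as a single Boltyanski approximating cone is not immediate — the multi-needle construction yields an approximating cone only for each finite family $\{(t_k,\omega_k)\}$, and a priori the separating covector depends on the family. Passing to one covector valid for a.e.\ $t$ requires the compactness of $\partial\Psi(\bar X(T))$ and a finite-intersection argument (the ``classical infinite intersection argument'' the paper delegates to \cite{book_sussmann}); your write-up hints at finite subcones but then separates against the full $K$ in one shot, so you should either run that intersection argument explicitly or justify that the full closed cone is approximating. Route (i) does not suffer from this issue, so as written your proof stands on that leg.
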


\begin{remark}\label{rmk:abnormals}
    In Theorem~\ref{thm:pmpfinite} only normal extremals are considered, since the problem of minimizing the functional $J:\U\to\R$ defined in \eqref{eq:def_funct} does not admit abnormal extremals.
\end{remark}

\begin{remark}\label{rmk:unbounded_controls}
    Here we consider controls $u:[0,T]\to\R^m$ taking values in an arbitrary compact set $U\subset\R^m$. However, it is possible to remove this constraint, and to add in the definition \eqref{eq:def_funct} of the functional $J$ the penalization $\beta\| u\|_{L^2}^2$ on the energy of the control, where $\beta>0$ is a parameter tuning this regularization.
    In this case, Theorem~\ref{thm:pmpfinite} holds in the same form with a slightly different Hamiltonian, which has now to take into account the running cost.
\end{remark}

\begin{remark} \label{rmk:measure_formulation}
In \eqref{eq:pmp_covect} it is possible to assign the following alternative terminal-time Cauchy datum:
\begin{equation*}
    p_i^j(T) = - \nabla_x g_i\big(x_i^j(T)\big)
\end{equation*}
for every $i=1,\ldots,M$ and $j=1,\ldots,N$.
In this case, we introduce the probability measure $d\Gamma_i$ on the set $\A_i^N=\{ \alpha_i^1,\ldots,\alpha_i^N\}$, defined as $d\Gamma_i(\alpha_i^j)=\gamma_i^j$, where $(\gamma_i^j)_{i=1,\ldots,M}^{j=1,\ldots,N}$ are the coefficients prescribed by Theorem~\ref{thm:pmpfinite}.
Therefore, we can rephrase \eqref{eq:max_cond_pmp} as
\begin{equation*}
\begin{split}
    &\frac{1}{M} \sum_{i=1}^M
 \int_{\A_i^N} p_i^j(t) \cdot F\big(\bar x_i^j(t), \bar u(t)\big) \,d \Gamma_i(\alpha_i^j) \\
 &\quad = \max_{\omega\in U} \frac{1}{M} \sum_{i=1}^
M \int_{\A_i^N} p_i^j(t) \cdot F(\bar x_i^j(t), \omega) \, 
d \Gamma_i(\alpha_i^j).
\end{split}
\end{equation*}
This formulation is particularly suitable to generalize to the case of infinitely many perturbations, that has been investigated in \cite{article_vinter}.
Furthermore, the case of infinitely many particles with finitely many perturbations per particle can be treated with a mean-field analysis, as recently done in \cite{BCFH_23,CFS_23} in the smooth framework.
\end{remark}

\subsection{Interpretation of the PMP}
An important message conveyed by Theorem~\ref{thm:pmpfinite} is that \emph{any strong local minimizer of the minimax problem related to the functional $J$ in \eqref{eq:def_funct} is an extremal for another smooth and properly weighted optimal control problem}. 
Namely, let us consider the functional $J_\Gamma:\U\to\R$ defined as
\begin{equation}\label{eq:funct_smooth}
    J_\Gamma(u) = \frac1M \sum_{i=1}^M \sum_{j=1}^N \gamma_i^j 
    g_i\big( x_i^j(T) \big),
\end{equation}
where $x_i^j:[0,T]\to\R^d$ are the same as in \eqref{eq:def_funct}, and where $(\gamma_i^j)_{i=1,\ldots,M}^{j=1,\ldots,N}$ are precisely the non-negative coefficients prescribed by Theorem~\ref{thm:pmpfinite} for the strong local minimizer $(\bar u, \bar X)$ of $J$.
Then, it turns out that $(\bar u, \bar X)$ satisfies the corresponding PMP conditions for the optimal control problem associated to $J_\Gamma$, which is \emph{smooth} with respect to the terminal-states.
This fact is extremely interesting from a numerical perspective, since the minimization of a functional with smooth terminal cost can be, in general, more easily addressed.
On the other hand, the coefficients $(\gamma_i^j)_{i=1,\ldots,M}^{j=1,\ldots,N}$ are not explicitly available a priori, as they are themselves part of the task of finding a strong local minimizer of $J$.
In Section~\ref{sec:numerics}, we propose and numerically test a procedure to iteratively adjust these coefficients.

\section{Proof of the PMP} \label{sec:proof}
The proof of Theorem~\ref{thm:pmpfinite} is classical and can be deduced, e.g., from \cite[Theorem~6.4.1]{book_vinter}.
However, in that case, the proof is rather sophisticated since it is developed in a very general framework.
In this section, we propose a more direct proof based on an \emph{Abstract Maximum Principle}, for which we refer to the recent expository paper \cite{book_rampazzo}.

\subsection{Abstract Maximum Principle}
In order to prove Theorem \ref{thm:pmpfinite}, we first need to collect some well known results of convex and non-convex analysis. 
\begin{lemma}[Result from \cite{book_convex}, Lemma~2.1.1 (Part~D)] \label{lem:taylor}
Given a convex function $f: \R^n \to \R$ and $x \in \R^n$, then 
\begin{equation} \label{eq:def_support}
    f(x+h) = f(x) + \sigma_{\partial f(x)}(h) + o(|h|) \quad \text{as } h \to 0,
\end{equation}
where $\partial f(x)$ denotes the subdifferential of $f$ at $x$.
\end{lemma}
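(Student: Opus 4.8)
\smallskip
\noindent\emph{Proof idea.} This is a classical fact of convex analysis; besides being recorded in \cite{book_convex}, it can be obtained from the following self-contained argument, which I sketch. The two ingredients are: (i) a finite-valued convex function $f:\R^n\to\R$ is locally Lipschitz, so, after fixing $x$, there are $\delta_0>0$ and $L>0$ with $|f(y)-f(z)|\le L|y-z|$ for all $y,z\in B(x,\delta_0)$; and (ii) the one-sided directional derivative
\begin{equation*}
    f'(x;v):=\lim_{t\downarrow0}\frac{f(x+tv)-f(x)}{t}
\end{equation*}
exists for every $v\in\R^n$ and equals $\sigma_{\partial f(x)}(v)=\max_{p\in\partial f(x)}\langle p,v\rangle$. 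Since $\partial f(x)$ is nonempty, convex and compact, $\sigma_{\partial f(x)}$ is finite, positively $1$-homogeneous, sublinear and Lipschitz; I will use these properties freely.

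I would then set $r(h):=f(x+h)-f(x)-\sigma_{\partial f(x)}(h)$. The subgradient inequality $f(x+h)\ge f(x)+\langle p,h\rangle$, valid for every $p\in\partial f(x)$, gives at once $r(h)\ge0$, so the assertion \eqref{eq:def_support} reduces to the one-sided bound $r(h)\le o(|h|)$ as $h\to0$.

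To establish this I would argue by contradiction. If it failed, there would be $c>0$ and $h_k\to0$, $h_k\neq0$, with $r(h_k)\ge c|h_k|$; write $h_k=t_kv_k$ with $t_k=|h_k|\to0$ and $|v_k|=1$. By compactness of the unit sphere we may assume $v_k\to v$ along a subsequence, with $|v|=1$. For $k$ large, $x+t_kv_k$ and $x+t_kv$ both lie in $B(x,\delta_0)$, so $|f(x+t_kv_k)-f(x+t_kv)|\le Lt_k|v_k-v|$; dividing the expression for $r(h_k)$ by $t_k$, using this bound and the Lipschitz continuity of $\sigma_{\partial f(x)}$ to replace $v_k$ by $v$ up to an error $O(|v_k-v|)\to0$, and invoking (ii) along the \emph{fixed} direction $v$, one obtains $\limsup_k r(h_k)/t_k\le f'(x;v)-\sigma_{\partial f(x)}(v)=0$, which contradicts $r(h_k)/t_k\ge c>0$.

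The only genuine obstacle is this last step: upgrading the \emph{directionwise} convergence of the difference quotients in (ii) to the \emph{uniform-in-direction} decay recorded by the $o(|h|)$ remainder. This is exactly where local Lipschitz continuity of $f$ near $x$ and compactness of $S^{n-1}$ enter. The remaining ingredients — existence of $f'(x;\cdot)$, its identification with $\sigma_{\partial f(x)}$, and the elementary properties of support functions — are standard and can alternatively be quoted directly from \cite{book_convex}.
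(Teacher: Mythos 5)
Your argument is correct, but it is worth noting that the paper does not prove this lemma at all: it is simply quoted from Hiriart-Urruty and Lemar\'echal \cite{book_convex} (Lemma~2.1.1, Part~D), where the statement is exactly the first-order expansion of a finite convex function via its subdifferential. Your sketch supplies a genuine self-contained proof, and the structure is sound: the subgradient inequality gives $r(h):=f(x+h)-f(x)-\sigma_{\partial f(x)}(h)\ge 0$ for free, and the contradiction argument correctly upgrades the pointwise (directionwise) limit $f'(x;v)=\sigma_{\partial f(x)}(v)$ to a bound uniform over directions, which is exactly what the $o(|h|)$ remainder encodes. The two ingredients you use to do this, local Lipschitz continuity of $f$ near $x$ and compactness of $S^{n-1}$ together with the Lipschitz continuity of the support function of the (compact) set $\partial f(x)$, are precisely the right ones, and the monotonicity in $t$ of the difference quotients guarantees the convergence along the sequence $t_k\downarrow 0$ in the fixed direction $v$. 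The facts you quote without proof (nonemptiness and compactness of $\partial f(x)$, the max formula $f'(x;\cdot)=\sigma_{\partial f(x)}$) are standard and available in the same reference, so relying on them is consistent with the level of the paper. In short: where the paper buys the result wholesale by citation, your proposal reproves it from the two classical ingredients, which is a legitimate and slightly more instructive route; it costs a compactness-plus-Lipschitz argument but makes explicit why the expansion is uniform in the direction of $h$ rather than merely directional.
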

We recall that, given a convex and compact set $K\subset \R^n$, the \emph{support function} $\sigma_K:\R^n\to \R$ is the convex sublinear function:
\begin{equation*}
    \sigma_K(x) := \sup_{p\in K} \langle p, x \rangle.
\end{equation*}
For a thorough discussion on the properties of support functions, we recommend \cite[Part~C]{book_convex}.
\begin{lemma} \label{lem:max_rule}
Given $N$ convex functions $f_j: \R^d \to \R$, $j=1, \ldots, N$, let us define $\Phi:\R^{d\times N}\to \R$ as
$\Phi(\xi) = \max_j f_j(x^j)$ for every $\xi=(x^1,\ldots,x^N)$.
Then, for every $\xi \in \R^{d\times N}$ we have that
\begin{equation*}
\begin{split}
    \partial \Phi(\xi) \subset 
\big\{ \big(\gamma^1 \partial f_1(x^1), &\ldots, \gamma^N \partial f_N(x^N)  \big): \\
& \gamma \in \Gamma,  \gamma^j=0 \text{ if } f_j(x^j) < \Phi(x)
\big\},
\end{split}
\end{equation*}
where $\Gamma := \{\gamma = (\gamma^1, \ldots, \gamma^N) \in \R^N\!: \gamma^j \geq 0, \sum_j\gamma^j =1\}$.
\end{lemma}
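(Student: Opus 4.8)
The plan is to derive the claim from the first--order expansion of Lemma~\ref{lem:taylor}, applied both to $\Phi$ itself and to each $f_j$ separately, and then to translate the resulting equality of directional derivatives into an equality of sets via support functions. First note that $\Phi=\max_j f_j$ is convex on $\R^{d\times N}$, being a finite maximum of the convex functions $\xi\mapsto f_j(x^j)$ (each is $f_j$ composed with the $j$-th block projection $\pi_j:\R^{d\times N}\to\R^d$, hence convex). So Lemma~\ref{lem:taylor} gives
\[
\Phi(\xi+h)=\Phi(\xi)+\sigma_{\partial\Phi(\xi)}(h)+o(|h|)\qquad\text{as }h\to0.
\]
I will compute the left-hand side a second way and read off $\sigma_{\partial\Phi(\xi)}$ by comparison, which is enough since a nonempty compact convex set is uniquely determined by its support function.

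For the second computation, write $h=(h^1,\dots,h^N)$ and apply Lemma~\ref{lem:taylor} to each $f_j$ at $x^j$: $f_j(x^j+h^j)=f_j(x^j)+\sigma_{\partial f_j(x^j)}(h^j)+r_j(h^j)$ with $r_j(h^j)=o(|h^j|)=o(|h|)$. Since there are only finitely many indices, the remainders $r_j$ can be pulled out of the maximum uniformly, so $\Phi(\xi+h)=\max_{j}\big(f_j(x^j)+\sigma_{\partial f_j(x^j)}(h^j)\big)+o(|h|)$. Let $A(\xi):=\{\,j:\ f_j(x^j)=\Phi(\xi)\,\}$ be the active set. For $j\notin A(\xi)$ one has $f_j(x^j)\le\Phi(\xi)-\delta$ for some $\delta>0$, while $\sigma_{\partial f_j(x^j)}(h^j)=O(|h|)$ because each $f_j$ is finite convex on $\R^d$, hence locally Lipschitz, so $\partial f_j(x^j)$ is nonempty, convex and compact; likewise the active terms are $\ge \Phi(\xi)-O(|h|)$. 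Thus for $|h|$ small enough the inactive indices cannot attain the maximum, and
\[
\Phi(\xi+h)=\Phi(\xi)+\max_{j\in A(\xi)}\sigma_{\partial f_j(x^j)}(h^j)+o(|h|).
\]

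Comparing the two expansions, the difference $\sigma_{\partial\Phi(\xi)}(h)-\max_{j\in A(\xi)}\sigma_{\partial f_j(x^j)}(h^j)$ is $o(|h|)$; evaluating along rays $h=tv$, $t\downarrow 0$, and using that support functions and their finite maxima are positively homogeneous of degree one, the $o(|h|)$ cancels and we get equality for every $h$:
\[
\sigma_{\partial\Phi(\xi)}(h)=\max_{j\in A(\xi)}\sigma_{\partial f_j(x^j)}(h^j),\qquad h\in\R^{d\times N}.
\]
To conclude, embed each subdifferential into $\R^{d\times N}$ as $K_j:=\{0\}\times\cdots\times\partial f_j(x^j)\times\cdots\times\{0\}$ (supported only on the $j$-th block), so that $\sigma_{K_j}(h)=\sigma_{\partial f_j(x^j)}(h^j)$, and recall the standard identity $\max_{j\in A(\xi)}\sigma_{K_j}=\sigma_{\,\mathrm{conv}\left(\bigcup_{j\in A(\xi)}K_j\right)}$, where the right-hand set is compact and convex as the convex hull of a finite union of nonempty compact convex sets. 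By uniqueness of the support function this gives $\partial\Phi(\xi)=\mathrm{conv}\left(\bigcup_{j\in A(\xi)}K_j\right)$. Any element of this hull is $\sum_{j\in A(\xi)}\gamma^j w_j$ with $w_j\in K_j$ and $\gamma=(\gamma^j)_{j\in A(\xi)}$ in the unit simplex; setting $\gamma^j=0$ for $j\notin A(\xi)$ and reading the sum block by block yields exactly a tuple $\big(\gamma^1\partial f_1(x^1),\dots,\gamma^N\partial f_N(x^N)\big)$ with $\gamma\in\Gamma$ and $\gamma^j=0$ whenever $f_j(x^j)<\Phi(\xi)$, which is the asserted inclusion (indeed an equality).

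The main obstacle is the bookkeeping in the two places where finiteness of the index set matters: pulling the remainders $r_j$ uniformly out of the maximum, and discarding inactive indices for small $|h|$, for which one needs the explicit gap $\delta$ together with the local Lipschitz bound of each $f_j$ near $x^j$. Everything else is a routine dictionary between subdifferentials, directional derivatives and support functions, furnished by Lemma~\ref{lem:taylor}.
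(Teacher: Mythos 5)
Your proof is correct, but it takes a different route from the paper: the paper does not prove this lemma at all, it simply cites Corollary~4.3.2 of Hiriart-Urruty--Lemar\'echal (the standard formula $\partial(\max_j f_j)=\mathrm{conv}\bigcup_{j\ \mathrm{active}}\partial f_j$, applied here to the compositions $f_j\circ\pi_j$ with the block projections, whose subdifferentials are the block-embedded sets you call $K_j$). You instead rederive that max-rule from scratch in the block-separable setting, using only the first-order expansion of Lemma~\ref{lem:taylor} applied to $\Phi$ and to each $f_j$, the elimination of inactive indices via the gap $\delta$ and the compactness of $\partial f_j(x^j)$, cancellation of the $o(|h|)$ terms by positive homogeneity along rays, and the identity $\max_j\sigma_{K_j}=\sigma_{\mathrm{conv}(\cup_j K_j)}$ together with uniqueness of support functions of compact convex sets. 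All of these steps are sound. What the citation buys is brevity; what your argument buys is a self-contained proof that stays within the toolkit the paper already introduces (Lemma~\ref{lem:taylor} and support functions), makes the block structure explicit rather than leaving the composition-with-projections step implicit, and in fact establishes the stronger conclusion that the stated inclusion is an equality.
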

\begin{proof}
See \cite[Corollary~4.3.2 (Part~D)]{book_convex}. 
\end{proof}

We also recall the definition of Boltyanski approximating cones from \cite[Definition~2.1]{book_rampazzo}.
\begin{definition}
Consider a subset $\mathcal{K} \subseteq \R^n$ and a point $y \in \mathcal{K}$, and let $\KK$ be a convex cone in $\R^n$.
We say that $\KK$ is a \textit{Boltyanski approximating cone to $\mathcal{K}$ at $y$} if
$\KK = L C$ where,
\begin{enumerate}
    \item[$(i)$] for an integer $m>0$, $C \subseteq \R^m$ is a convex cone,
    \item[$(ii)$] $L: \R^m \to \R^n$ is a linear mapping,
    \item[$(iii)$] there exists $\delta>0$ and a continuous map $\mathsf F:  C \cap B_\delta(0) \to \mathcal{K}$ such that
    \begin{equation}
        \mathsf F(c) = y + L c + o(|c|), \quad \text{as }c \to 0.
    \end{equation}
\end{enumerate}
\end{definition}
In the next proposition we provide the construction of a Boltyanski approximating cone to the epigraph of a non-smooth convex function.
This represents a slight generalization w.r.t. the framework of \cite{book_rampazzo}, where the cost function is assumed to be differentiable. 
Indeed, in that case, it is sufficient to consider the \emph{graph} of the cost, and to take the tangent hyperplane as a Boltyanski approximating cone. 

\begin{prop} \label{prop:cone_epi}
Let $\mathcal{R} \subseteq \R^n$ be a set and, for $y \in \mathcal{R}$, let $\mathbf{R}\subseteq \R^n$ be a Boltyanski approximating cone at $y$ to $\mathcal{R}$. 
Let $f:\R^n \to \R$ be a convex function, and let $\mathrm{epi}(f_{|\mathcal{R}}) := \{(x,\eta) \in \R^{n+1}: x\in \mathcal{R}, \eta\geq f(x)\}$. 
Then, a Boltyanski approximating cone at $(y, f(y))$ to $\mathrm{epi}(f_{|\mathcal{R}})$ is
\begin{equation*}
    \tilde{\mathbf{R}} := \{(v, \sigma_{\partial f(y)}(v) + \eta) : v \in \mathbf{R}, \eta \geq 0\}.
\end{equation*}
\end{prop}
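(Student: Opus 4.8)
The plan is to produce, directly from the definition, the data $(m', C', L', \delta', \mathsf F')$ certifying that $\tilde{\mathbf R}$ is a Boltyanski approximating cone at $(y, f(y))$ to $\mathrm{epi}(f_{|\mathcal R})$, starting from the data $(m, C, L, \delta, \mathsf F)$ that certify the same property for $\mathbf R$ at $y$ to $\mathcal R$. Preliminarily I would record that $\partial f(y)$ is a nonempty compact convex set (since $f$ is finite and convex on all of $\R^n$, hence locally Lipschitz), so that its support function $\sigma_{\partial f(y)}$ is real-valued, sublinear (subadditive and positively homogeneous) and globally Lipschitz. Sublinearity of $\sigma_{\partial f(y)}$ together with the cone property of $\mathbf R$ immediately gives that $\tilde{\mathbf R} = \{(v,t): v \in \mathbf R,\ t \ge \sigma_{\partial f(y)}(v)\}$ is itself a convex cone; this is the easy bookkeeping part.

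The first genuine point is the factorization $\tilde{\mathbf R} = L' C'$. Since the definition forces $L'$ to be \emph{linear}, the sublinear constraint coming from $\sigma_{\partial f(y)}$ cannot be absorbed into $L'$ and must instead be pushed into the domain cone. I would therefore set $m' := m+1$,
\[
    C' := \{(c,s) \in C\times\R : s \ge \sigma_{\partial f(y)}(Lc)\} \subseteq \R^{m+1}, \qquad L'(c,s) := (Lc,\,s),
\]
and check that: $C'$ is a convex cone (subadditivity and positive homogeneity of $\sigma_{\partial f(y)}\circ L$, together with the cone property of $C$); $L'$ is linear; and $L'C' = \{(Lc,s): c\in C,\ s\ge \sigma_{\partial f(y)}(Lc)\} = \{(v,t): v\in\mathbf R,\ t\ge\sigma_{\partial f(y)}(v)\} = \tilde{\mathbf R}$, the middle equality using $\mathbf R = LC$ and the fact that $\sigma_{\partial f(y)}(Lc)$ depends on $c$ only through $Lc$.

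The second, and main, point is the construction of the approximating map. I would take $\delta' := \delta$ and define, for $(c,s)\in C'\cap B_\delta(0)$,
\[
    \mathsf F'(c,s) := \Big(\, \mathsf F(c),\ f\big(\mathsf F(c)\big) + s - \sigma_{\partial f(y)}(Lc) \,\Big).
\]
This lands in $\mathrm{epi}(f_{|\mathcal R})$: the first component lies in $\mathcal R$ because $\mathsf F$ takes values there, and $(c,s)\in C'$ forces $s - \sigma_{\partial f(y)}(Lc)\ge 0$, so the second component is $\ge f(\mathsf F(c))$. Continuity of $\mathsf F'$ is clear since $\mathsf F$, $f$, $L$ and $\sigma_{\partial f(y)}$ are continuous. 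For the first-order expansion I would combine $\mathsf F(c) = y + Lc + o(|c|)$ with Lemma~\ref{lem:taylor} applied at $x=y$ with increment $h = \mathsf F(c)-y$, obtaining $f(\mathsf F(c)) = f(y) + \sigma_{\partial f(y)}\big(Lc + o(|c|)\big) + o(|c|)$; here the Lipschitz continuity of $\sigma_{\partial f(y)}$ lets me replace $\sigma_{\partial f(y)}(Lc + o(|c|))$ by $\sigma_{\partial f(y)}(Lc) + o(|c|)$. Substituting, the second component of $\mathsf F'(c,s)$ equals $f(y) + s + o(|c|)$ and the first equals $y + Lc + o(|c|)$, so $\mathsf F'(c,s) = (y,f(y)) + L'(c,s) + o(|(c,s)|)$, using $|c|\le|(c,s)|$ to upgrade $o(|c|)$ to $o(|(c,s)|)$. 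This verifies all three conditions of the definition.

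The main obstacle I expect is precisely this handoff between the nonsmooth Taylor expansion of Lemma~\ref{lem:taylor} and the little-$o$ term coming from $\mathsf F$: one must be sure the remainder can be carried \emph{through} the sublinear function $\sigma_{\partial f(y)}$, which is exactly where the global Lipschitz continuity of the support function of the compact set $\partial f(y)$ enters. The secondary subtlety is the structural one noted above — linearity of $L'$ dictates encoding the epigraph constraint into $C'$ rather than into the map.
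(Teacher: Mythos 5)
Your proposal is correct and follows essentially the same route as the paper: the paper's $\tilde C$, $\tilde L$, and $\tilde{\mathsf F}$ coincide with your $C'$, $L'$, and $\mathsf F'$ (the paper parametrizes $\tilde C$ as $\{(c,\sigma_{\partial f(y)}(L c)+\eta):\eta\geq 0\}$, which is the same set as your $C'$), and the first-order expansion is obtained exactly as you do, via Lemma~\ref{lem:taylor} together with the boundedness of $\partial f(y)$ (i.e.\ the Lipschitz continuity of its support function). Your explicit upgrade of $o(|c|)$ to $o(|(c,s)|)$ is a small point the paper leaves implicit, but there is no substantive difference.
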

\begin{proof}
Since $\mathbf{R}$ is an approximating cone at $y$ to $\mathcal{R}$, there exists a convex cone $C \subseteq \R^m$, a linear mapping $L_{\mathbf{R}}:\R^m \to \R^n$, and a continuous function $\mathsf F_{\mathbf{R}}:C \to \mathcal{R}$ such that 
\begin{equation*}
    \mathsf F_{\mathbf{R}}(c) = y + L_{\mathbf{R}}c + o_1(|c|) \mbox{ as } c\to0  \mbox{ , and } \mathbf{R} = L_{\mathbf{R}} C.
\end{equation*}
Thus, we set $\tilde{{C}}:= 
\{ \big(c, \sigma_{\partial f(y)}(L_{\mathbf{R}} c) + \eta\big):
c \in C, \eta \geq 0\}.
$
We observe that $\tilde{{C}}$ is a convex cone, since it is the epigraph of the support function $c \mapsto \sigma_{\partial f(y)}(L_{\mathbf{R}} c)$ restricted to $C$. 
Moreover, let us set $\tilde{L}: \R^{m+1} \to \R^{n+1}$ such that $\tilde{L}(c,\mu) := (L_{\mathbf{R}} c, \mu)$ where $(c, \mu) \in \tilde{C}$, and let us define the continuous function $\tilde{\mathsf F}:\tilde C\to \mathrm{epi}(f|_{\mathcal{R}})$ as:
\begin{equation*}
    \tilde{\mathsf F}(c, \mu) := \big( \mathsf F_{\mathbf{R}}(c), \mu- \sigma_{\partial f(y)}(L_{\mathbf{R}}c) + f(\mathsf F_{\mathbf{R}}(c)) \big).
\end{equation*}
We observe that
\begin{equation*}
\begin{split}
    f(\mathsf F_{\mathbf{R}}(c)) &= f\big(y + L_{\mathbf{R}}c + o_1(|c|)\big)\\& = f(y) + \sigma_{\partial f(y)}(L_{\mathbf{R}} c) + o_2(|c|),   
\end{split}
\end{equation*}
where we have used Lemma~\ref{lem:taylor} and the boundedness of $\partial f(y)$.
Hence, for every $(c,\mu)\in \tilde C$, we have
\begin{equation*}
\begin{split}
    \tilde{\mathsf F}(c,\mu) &= \big(y, f(y) \big) + \big(L_{\mathbf{R}} c, \mu \big) + \big(o_1(|c|),o_2(|c|)\big) \\
    &= \big(y, f(y)\big) + \tilde{L}(c, \mu) +o(|c|),
\end{split}
\end{equation*}
and we set 
$\tilde{\mathbf{R}} := 
\tilde L \tilde C
$. Recalling that, for every $(c,\mu)\in \tilde C$, $\tilde{L}(c, \mu) =
    \tilde{L}\big(c, \sigma_{\partial f(y)}(L_\mathbf{R} c) + \eta\big)  
    =\big( L_\mathbf{R} c, \sigma_{\partial f(y)}(L_\mathbf{R} c) + \eta\big)$,
we deduce the thesis.
\end{proof}
Finally, we report here an extension of the Abstract  Maximum Principle presented in \cite[Section~5]{book_rampazzo}, aimed at encompassing the case of a non-smooth convex cost.
\begin{theorem} \label{thm:abstr_PMP}
Let us consider  a metric space $\U$,  a continuous map $y:\U\to\R^n$, a convex function $\Psi:\R^n\to\R$, and a set $\mathcal{S}\subset \R^n$. Let $(\bar u, y(\bar u))\in \U\times \R^n$ be a \emph{strong local minimizer}\footnote{This concept is the natural generalization of Definition~\ref{def:sl_minimizer}. See also \cite[Definition~5.1 and Remark~5.2]{book_rampazzo}.} for 
\begin{equation*}
    \min_{u \in \mathcal{U}} \Psi(y(u)) \quad \mbox{subject to } y(u) \in \mathcal{S}. 
\end{equation*}
Setting $\bar y:= y(\bar u)$, we define $\mathcal{R}:=\{y(u):u\in\U\}$, and let $\mathbf{R}, \mathbf{K}$ be Boltyanski approximating cones at $\bar y$ to $\mathcal{R}$ and $\mathcal{S}$, respectively.
Then, there exists $(\lambda, \lambda_c) \in \left( \R^{n+1} \right)^\star$ such that
\begin{equation*}
\begin{split}
    &(a) \quad (\lambda, \lambda_c) \neq (0,0),\\
    &(b) \quad \lambda \in -\mathbf{K}^\perp \quad \mbox{and } \lambda_c \leq 0, \\
    &(c) \quad
    \max_{v \in \mathbf{R}} \left( \min_{p \in \partial \Psi(\bar y) }\langle \lambda + \lambda_c p,v\rangle \right) = 0.
\end{split}
\end{equation*}
\end{theorem}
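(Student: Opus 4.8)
The plan is to derive the statement from the smooth-cost Abstract Maximum Principle of \cite[Section~5]{book_rampazzo} by lifting the problem to the epigraph of $\Psi$, where the terminal cost becomes a \emph{linear} — hence $C^1$ — function of an augmented state. First I would introduce the augmented map $y^e\colon \U\times[0,\infty)\to\R^{n+1}$, $y^e(u,s):=\big(y(u),\,\Psi(y(u))+s\big)$, which is continuous because a finite-valued convex function on $\R^n$ is continuous, and whose image is exactly $\mathcal R^e:=\mathrm{epi}(\Psi|_{\mathcal R})$. Setting $\mathcal S^e:=\mathcal S\times\R$, $\pi(x,\eta):=\eta$, and $\bar z:=\big(\bar y,\Psi(\bar y)\big)=y^e(\bar u,0)$, I would check that $(\bar u,0)$ is a strong local minimizer of $\min \pi(y^e(u,s))$ subject to $y^e(u,s)\in\mathcal S^e$: if $\|y^e(u,s)-\bar z\|$ is small then $\|y(u)-\bar y\|$ is small, and for any feasible $(u,s)$ one has $\pi(y^e(u,s))=\Psi(y(u))+s\ge\Psi(\bar y)=\pi(\bar z)$.

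Next I would assemble Boltyanski approximating cones at $\bar z$ for the two lifted sets. For $\mathcal R^e$ this is exactly Proposition~\ref{prop:cone_epi} with $f=\Psi$, $y=\bar y$ and the given cone $\mathbf R$, which produces $\tilde{\mathbf R}=\{(v,\sigma_{\partial\Psi(\bar y)}(v)+\eta)\colon v\in\mathbf R,\ \eta\ge0\}$. For $\mathcal S^e=\mathcal S\times\R$ I would verify directly from the definition that $\tilde{\mathbf K}:=\mathbf K\times\R$ works, using the cone $C\times\R$, the linear map $L_{\mathbf K}\oplus\id_\R$, and the map $(c,t)\mapsto(\mathsf F_{\mathbf K}(c),\Psi(\bar y)+t)$, where $C,L_{\mathbf K},\mathsf F_{\mathbf K}$ realize $\mathbf K$ as an approximating cone to $\mathcal S$.

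Then I would apply the smooth Abstract Maximum Principle of \cite{book_rampazzo} to the lifted problem. Since $\pi$ is linear with constant gradient $e^\star:=(0,\dots,0,1)\in(\R^{n+1})^\star$, its conclusion (after matching conventions) reads: there is $(\Lambda,\lambda_c)\in(\R^{n+1})^\star\times\R$, not both zero, with $\lambda_c\le0$, $\Lambda\in-\tilde{\mathbf K}^\perp$, and $\langle\Lambda+\lambda_c e^\star,w\rangle\le0$ for every $w\in\tilde{\mathbf R}$. Writing $\Lambda=(\lambda,\ell)$ with $\lambda\in(\R^n)^\star$, the identity $\tilde{\mathbf K}^\perp=\mathbf K^\perp\times\{0\}$ forces $\ell=0$ and $\lambda\in-\mathbf K^\perp$; together with $(\Lambda,\lambda_c)\neq(0,0)$ this gives (a) and (b). Finally $\Lambda+\lambda_c e^\star=(\lambda,\lambda_c)$, so for $w=(v,\sigma_{\partial\Psi(\bar y)}(v)+\eta)\in\tilde{\mathbf R}$ the inequality becomes $\langle\lambda,v\rangle+\lambda_c\sigma_{\partial\Psi(\bar y)}(v)+\lambda_c\eta\le0$; dropping the nonpositive term $\lambda_c\eta$, and noting equality at $v=0$ (as $0\in\mathbf R$ and $\sigma_{\partial\Psi(\bar y)}(0)=0$), yields $\sup_{v\in\mathbf R}\big(\langle\lambda,v\rangle+\lambda_c\sigma_{\partial\Psi(\bar y)}(v)\big)=0$. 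Since $\lambda_c\le0$, one has $\lambda_c\sigma_{\partial\Psi(\bar y)}(v)=\lambda_c\max_{p\in\partial\Psi(\bar y)}\langle p,v\rangle=\min_{p\in\partial\Psi(\bar y)}\langle\lambda_c p,v\rangle$, hence $\langle\lambda,v\rangle+\lambda_c\sigma_{\partial\Psi(\bar y)}(v)=\min_{p\in\partial\Psi(\bar y)}\langle\lambda+\lambda_c p,v\rangle$, and taking the supremum over $v\in\mathbf R$ produces exactly (c).

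The hard part will not be conceptual but bookkeeping: aligning the sign and polar-cone conventions of \cite{book_rampazzo} so that ``$\lambda_c\le0$'', ``$\lambda\in-\mathbf K^\perp$'' and the separation inequality come out with the asserted signs, and checking rigorously that the lifted triple $(\U\times[0,\infty),\,y^e,\,\pi)$ genuinely satisfies the hypotheses of the smooth theorem — in particular that strong local minimality in the lifted state space $\R^{n+1}$ follows from the one for $(\bar u,\bar y)$, which is precisely where continuity (equivalently, finiteness) of $\Psi$ enters, and that non-triviality of the multiplier survives the reduction $\ell=0$. Boundedness of $\partial\Psi(\bar y)$ — already used in Proposition~\ref{prop:cone_epi} — is what keeps $\sigma_{\partial\Psi(\bar y)}$ finite and legitimizes the $\min$--$\max$ manipulation in the last step.
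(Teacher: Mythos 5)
Your proposal is correct, but it reaches the conclusion by a different mechanism than the paper. The paper does not invoke the smooth Abstract Maximum Principle as a black box; instead it re-runs the proof of \cite[Theorem~5.1]{book_rampazzo}: it lifts to $\R^{n+1}$ by pairing $\tilde{\mathcal{R}}=\mathrm{epi}(\Psi_{|\mathcal R})$ with the \emph{cost-aware} set $\tilde{\mathcal S}=\{(x,\eta):x\in\mathcal S,\ \eta<\Psi(\bar y)\}\cup\{(\bar y,\Psi(\bar y))\}$, uses strong local minimality to see that these two sets are locally separate (\cite[Lemma~5.1]{book_rampazzo}), and then applies the set-separation result \cite[Corollary~4.1]{book_rampazzo} to linearly separate the approximating cones $\tilde{\mathbf R}$ (from Proposition~\ref{prop:cone_epi}) and $\tilde{\mathbf K}$; the sign $\lambda_c\le0$ is read off from the geometry of $\tilde{\mathcal S}$. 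You instead keep the constraint lift cost-free, $\mathcal S^e=\mathcal S\times\R$, absorb the cost into an augmented state via the extra scalar parameter $s\ge0$ so that the cost becomes the linear functional $\pi(x,\eta)=\eta$, and then apply the differentiable-cost version of the theorem to the augmented problem, recovering $(a)$--$(c)$ from the reduction $\Lambda=(\lambda,0)$; the sign of $\lambda_c$ and the nontriviality are inherited from the smooth statement. Both arguments hinge on Proposition~\ref{prop:cone_epi} and on the identity $\lambda_c\sigma_{\partial\Psi(\bar y)}(v)=\min_{p\in\partial\Psi(\bar y)}\langle\lambda_c p,v\rangle$ for $\lambda_c\le0$, and both verifications of strong local minimality are essentially the same. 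What your route buys is modularity — no need to re-enter the separation machinery, at the price of trusting that the smooth AMP of \cite{book_rampazzo} applies verbatim to the augmented metric control space $\U\times[0,\infty)$ (it does, since its hypotheses are purely of this abstract form); the paper's route is closer to self-contained and makes the origin of $\lambda_c\le0$ geometrically explicit. One small wording fix: you cannot literally ``drop the nonpositive term $\lambda_c\eta$'' from $\langle\lambda,v\rangle+\lambda_c\sigma_{\partial\Psi(\bar y)}(v)+\lambda_c\eta\le0$ (removing a nonpositive term only weakens the bound); simply evaluate the inequality at $\eta=0$, which is admissible since $\eta\ge0$, exactly as the paper does.
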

\begin{proof}
Following the proof of \cite[Theorem~5.1]{book_rampazzo}, let us introduce the sets $\tilde{\mathcal{S}} := \{ (x, \eta) \in \R^{n+1}: x \in \mathcal{S}, \eta < \Psi(\bar y) \} \cup \{(\bar y, \Psi(\bar y))\}$ and $\tilde{\mathcal{R}} := \text{epi}(\Psi_{|\mathcal{R}})$,
and their respective Boltyanski approximating cones at $(\bar y, \Psi(\bar y))$, i.e., 
$\tilde{\mathbf{K}} = \{ (w,\nu): w \in \mathbf{K}, \nu \geq 0 \}$ and, owing to Proposition~\ref{prop:cone_epi}, $\tilde{\mathbf{R}} := \{ \big(v, \sigma_{\partial \Psi(\bar y)}(v) + \eta \big): v \in \mathbf{R}, \eta \geq 0 \}$.
In virtue of \cite[Corollary~4.1]{book_rampazzo}, since $\tilde{\mathcal{R}}$ and $\tilde{\mathcal{S}}$ are locally separate (see \cite[Lemma~5.1]{book_rampazzo}), it follows that $\tilde{\mathbf{R}}$ and $\tilde{\mathbf{K}}$ are linearly separable, i.e., there exists $ (\lambda, \lambda_c) \in \left( \R^{n+1}\right)^\star$ s.t. $(\lambda,\lambda_c) \neq (0,0)$,
\begin{align*}
     \langle (\lambda, \lambda_c), \tilde{w} \rangle \geq 0 &\qquad \forall \tilde{w} \in \tilde{\mathbf{K}}, \\
     \langle (\lambda, \lambda_c), \tilde{v} \rangle \leq 0  &\qquad \forall \tilde{v} \in \tilde{\mathbf{R}}.
\end{align*}
From the first inequality, we deduce point (b) of the thesis, while the second inequality yields 
\begin{equation*}
    \langle \lambda, v \rangle + \lambda_c (\sigma_{\partial \Psi(\bar y)}(v) + \eta) \leq 0
\end{equation*}
for every $v\in\mathbf{R}$ and $\eta\geq 0$,
which implies for $\eta =0$ that $\langle \lambda, v \rangle + \lambda_c \sigma_{\partial \Psi(\bar y)}(v) \leq 0$ for every $v\in\mathbf{R}$. Since the equality is attained for $v=0$, we deduce that
\begin{equation*}
    \max_{v \in \mathbf{R}} \left( \langle \lambda, v \rangle + \lambda_c \sigma_{\partial \Psi(\bar y)}(v) \right) = 0.
\end{equation*}
Finally, recalling that $\lambda_c \leq 0$, the thesis follows from \eqref{eq:def_support}. 
\end{proof}

\subsection{Proof of Theorem~\ref{thm:pmpfinite}}
We finally prove the necessary optimality conditions associated to \eqref{eq:def_funct} using Theorem~\ref{thm:abstr_PMP}.
In our framework, $\U$ is the space of admissible controls, $\R^n=(\R^{d\times N})^M$, and $y:\U\to\R^n$ maps every control $u\in \U$ to the final-time state of the corresponding solution of \eqref{eq:ext_node}.
Since we do not deal with final-time constraints, we have that $\mathcal{S}=(\R^{d\times N})^M$.
Moreover, we set $\mathcal{R}:=
y(\U)
$.
As a direct application of \cite[Corollary~6.1]{book_rampazzo}, we obtain the approximating cones to $\mathcal{R}$. 

\begin{prop} \label{prop:appr_reachable}
Let $(\bar u, \bar X)$ be a strong local minimizer for \eqref{eq:def_funct}, and for $r\geq 1$ let $\{t_1, \ldots, t_r\} \subseteq [0,T]$ be arbitrary distinct Lebesgue points for $t\mapsto \mathcal{F}(\bar X(t), \bar u(t))$, and let us take $\omega_1, \ldots, \omega_r \in U$. Then, if we set $W_k := \mathcal{F}(\bar X(t_k), \omega_k) - \mathcal{F}(\bar X(t_k), \bar u (t_k))$,  
\begin{equation} \label{eq:bol_cone_tot}
    \mathbf{R} := \mathrm{span}^+_{k=1,\ldots,r} \left\{ \mathcal{M}(t_k, T) 
    W_k \right\}
\end{equation}
is a Boltyanski approximating cone to $\mathcal{R}$ at $\bar X(T)$, where 
$\mathcal{M}(\cdot, \cdot)$ is the fundamental matrix\footnote{See, e.g., \cite[Section~2.2]{book_bressan}.} associated to the linearized equation
$\dot{V}(t) = \nabla_{\!X} \mathcal{F}\left( \bar X(t),\bar u (t) \right)  V(t)$.
\end{prop}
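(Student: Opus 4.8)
The plan is to apply the general construction of Boltyanski approximating cones to reachable sets from \cite[Corollary~6.1]{book_rampazzo} to the extended control system \eqref{eq:ext_node}, and then to simplify the resulting cone using the linearity of the state-to-state variational equation. First I would verify that the hypotheses of \cite[Corollary~6.1]{book_rampazzo} are satisfied by \eqref{eq:ext_node}: by assumption (A1), the vector field $F$ is continuous and $C^2$ in the state, with the stated sublinear-Lipschitz bound; since $\mathcal{F}$ acts component-wise as $F(\cdot,u)$, the extended field $\mathcal{F}:(\R^{d\times N})^M\times\R^m\to(\R^{d\times N})^M$ inherits continuity, $C^1$ (indeed $C^2$) regularity in $X$, and a sublinear-Lipschitz estimate of the same type, which in particular guarantees global existence and uniqueness of the trajectory $\bar X(\cdot)$ on $[0,T]$ together with well-posedness of the linearized equation. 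Consequently $\mathcal{R}=\{X(T):X\text{ solves \eqref{eq:ext_node} for }u\in\U\}$ is a well-defined subset of $(\R^{d\times N})^M$, and $\bar X(T)\in\mathcal{R}$.

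Next I would recall the classical needle-variation (Pontryagin/packet-of-needles) construction: at distinct Lebesgue points $t_1,\dots,t_r$ of $t\mapsto\mathcal{F}(\bar X(t),\bar u(t))$, with chosen values $\omega_1,\dots,\omega_r\in U$, one modifies $\bar u$ on short intervals $[t_k-\epsilon\,s_k,\,t_k)$ (for $s_k\geq 0$) to take the constant value $\omega_k$, and lets $\epsilon\to 0^+$. By the standard analysis of packets of needle variations — this is precisely the content of \cite[Corollary~6.1]{book_rampazzo} — the endpoint map $\epsilon\mapsto X_\epsilon(T)$ satisfies $X_\epsilon(T)=\bar X(T)+\epsilon\sum_{k=1}^r s_k\,\mathcal{M}(t_k,T)W_k+o(\epsilon)$, where $W_k=\mathcal{F}(\bar X(t_k),\omega_k)-\mathcal{F}(\bar X(t_k),\bar u(t_k))$ is the elementary perturbation vector generated at $t_k$, and $\mathcal{M}(t_k,T)$ is the fundamental matrix that transports it to the terminal time along the linearized flow $\dot V=\nabla_X\mathcal{F}(\bar X(t),\bar u(t))V$. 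This says exactly that the map $(s_1,\dots,s_r)\mapsto X_\epsilon(T)$ (reparametrized) realizes, in the sense of the definition of a Boltyanski approximating cone, the convex cone $C=\{(s_1,\dots,s_r):s_k\geq 0\}$ together with the linear map $L(s_1,\dots,s_r)=\sum_k s_k\mathcal{M}(t_k,T)W_k$; hence $\mathbf{R}=LC=\mathrm{span}^+_{k}\{\mathcal{M}(t_k,T)W_k\}$ is a Boltyanski approximating cone to $\mathcal{R}$ at $\bar X(T)$, which is \eqref{eq:bol_cone_tot}.

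The only point requiring care — and the main (mild) obstacle — is checking that the regularity we have, namely $\mathcal{F}$ merely $C^1$ in $X$ with a sublinear-in-$\omega$ Lipschitz bound, is exactly the regularity demanded by \cite[Corollary~6.1]{book_rampazzo} for the needle-variation differentiability estimates and the validity of the $o(\epsilon)$ remainder; this is a routine but slightly technical matching of hypotheses, using that on the relevant compact piece of trajectory the constants in (A1) can be taken uniform and that $U$ is compact so $W_k$ is bounded. Once this is in place, the statement follows directly by specializing the cited corollary to the extended system \eqref{eq:ext_node}, and no further computation is needed.
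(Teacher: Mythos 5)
Your proposal matches the paper's approach: the paper states this proposition as a direct application of \cite[Corollary~6.1]{book_rampazzo} to the extended component-wise system \eqref{eq:ext_node}, exactly as you do, with your needle-variation discussion simply unpacking what that corollary already provides. No substantive difference.
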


We observe that the Boltyanski cone \eqref{eq:bol_cone_tot} reflects the particular structure of the dynamics \eqref{eq:ext_node}. Indeed, we can write
$    \mathbf{R}= \prod_{i=1}^M \left( \prod_{j=1}^N 
    \mathrm{span}^+_{k=1,\ldots,r} \left\{ M^{i,j}(t_k, T) 
    w_k^{i,j} \right\}
    \right)$,
where $w_k^{i,j}:=F(\bar x_i^j(t_k), \omega_k) - 
{F}(\bar x_i^j(t_k), \bar u(t_k))$, and  $M^{i,j}(\cdot,\cdot)$ is the fundamental matrix associated to the linearization of the trajectory $\bar x_i^j$, i.e., $\dot v(t) = \nabla_x F(\bar x_i^j(t), \bar u(t))v(t)$.
We are now in position to present the proof of Theorem~\ref{thm:pmpfinite}.

\begin{proof}[Proof of Theorem~\ref{thm:pmpfinite}] 
In virtue of Theorem~\ref{thm:abstr_PMP}, since $\mathcal{S}=(\R^{d\times N})^M$, we have that $\mathbf{K}=(\R^{d\times N})^M$ and $\mathbf{K}^\perp = \{0\}$. 
Moreover, using the same notations as in Section~\ref{sec:model}, we have that $\Psi(X) = \frac1M \sum_{i=1}^M\tilde g_i(X_i)$.
Hence, by setting $\lambda_c=-1$, we consider the approximating cone $\mathbf{R}$ to $\mathcal{R}$ at $\bar X(T)$ constructed in Proposition~\ref{prop:appr_reachable} with Lebesgue points $\{ t_1,\ldots,t_r\}$, and from Theorem~\ref{thm:abstr_PMP} it follows that
\begin{equation*}
    0
    = 
  \max_{V \in\mathbf{R} \cap \overline{B_1(0)}} \min_{P \in - \partial \Psi(\bar X(T))}
  \langle P, V \rangle,
\end{equation*}
since the maximum is attained at $0\in \mathbf{R} \cap \overline{B_1(0)}$. 
Moreover, by using von Neumann's Minimax Theorem (see, e.g., \cite[Theorem~3.4.6]{book_vinter}) we obtain that
\begin{equation*}
    \min_{P \in - \partial \Psi(\bar X(T))}
    \max_{V \in \mathbf{R} \cap \overline{B_1(0)}} 
    \langle P, V \rangle = 0,
\end{equation*}
i.e., there exists $P^* \in - \partial \Psi(\bar X(T))$ such that $\langle P^*, V \rangle \leq 0$  for every $V \in \mathbf{R} \cap \overline{B_1(0)}$. Since the last inequality is invariant by positive rescaling, we deduce that 
\begin{equation*}
    \langle P^*, \mathcal{M}(t_k, T) \left( 
    \mathcal{F}(\bar X(t_k), \omega_k)- 
    \mathcal{F}(\bar X(t_k), \bar u(t_k)\right) \rangle \leq 0.
\end{equation*}
From the structure of the cost function $\Psi$ and Lemma~\ref{lem:max_rule}, 
it follows that the covector $P^*$$= (P^*_1,\ldots,P^*_M)$ has the form $P^*_i = -\frac1M\left(\gamma_i^1 \nabla_x g_i\big(\bar x_i^1(T)\big), \ldots, \gamma_i^N \nabla_x g_i\big(\bar x_i^N(T)\big)\right)$.
Then, the thesis follows
from a classical infinite intersection argument, 
as detailed e.g. in the proof of \cite[Theorem~5.7.1]{book_sussmann}.
\end{proof}

\section{NUMERICS} \label{sec:numerics}
In this section, we present numerical experiments designed to validate our interpretation of the PMP in the context of robust neural network training.\\
Here, we approach the robust training of deep neural networks from a control perspective. Specifically, we interpret neural networks as discretized representations of controlled ODEs such as the one depicted in \eqref{eq:nODE}. Then, the network parameter training is viewed as a control problem. Our objective is to minimize the cost function \eqref{eq:def_funct}, which is defined in order to obtain a network which robustly classifies the input data. In particular, we employ the PMP outlined in Theorem \ref{thm:pmpfinite} and derived in Section \ref{sec:proof}, to identify the controls/network parameters that yield effective and robust classification performance. 
The utilization of the PMP for neural network training has already been successfully applied and explored in \cite{LCCE19,BCFH_23}. In practice, this approach typically involves employing a shooting method, such as the one outlined in Algorithm \ref{alg:shooting}, which consists of repeating the forward evolution of the trajectories, the backward evolution of the adjoint variables, and the update of the controls, until convergence. For further insights into this methodology, we direct readers to \cite{Sakawa_80}. 
It is crucial to emphasize that the novelty of our approach lies in the selection of the weights to be incorporated into \eqref{eq:funct_smooth}, rather than in the use of the PMP for training neural networks.
To keep the analysis simple, we focus on a classification task in $2$d, but we defer the extension to higher-dimensional experiments to future research. The primary objective here is to robustly train a network to distinguish between data points separated by a nonlinear boundary.  
\begin{algorithm}[ht!]
\scriptsize
\caption{Shooting method}\label{alg:shooting}
\KwData{\\ $u^0:$ initial guess for controls; $\mathrm{iter\_max}:$ number of shooting iterations\; $\Delta t:$ time-discretization of the interval $[0,T]$;
$\tau:$ memory parameter\;
}
\KwResult{$u^{\mathrm{iter\_max}}$}
$\text{time\_nodes} \gets T / \Delta t$\;
\For{$k = 0, \ldots, \mathrm{iter\_max}$}{
\For{$i=1,\ldots,M$, $j=1\ldots,N$}{
\For{$n = 1, \ldots, \mathrm{time\_nodes}$}{
    $x_i^j(t_{n+1}) \gets x_i^j(t_n) + \Delta t\, {F}(x_i^j(t_n), u^k(t_n))$\; 
}
}
\For{$i=1,\ldots,M$, $j=1,\ldots,N$}{
$p_i^j(t_{\mathrm{time\_nodes}}) \gets 
-\frac{\gamma_i^j}{M}\nabla_{\!x} g_i \left(x_i^j(t_{\mathrm{time\_nodes}})\right)$\;
\For{$n = \mathrm{time\_nodes}, \ldots, 1$}{
    $p_i^j(t_{n-1}) \gets p_i^j(t_{n}) + \Delta t\, p_i^j(t_{n})\cdot \nabla_{\!x}{F}\big(x_i^j(t_{n}), u^k(t_{n})\big)$\; 
}
}
\For{$n = \mathrm{time\_nodes}, \ldots, 1$}{
    $X(t_{n-1})\gets(x_i^j)_{i=1,\ldots,M}^{j=1,\ldots,N}$ and $P(t_{n-1})\gets(p_i^j)_{i=1,\ldots,M}^{j=1,\ldots,N}$\;
    $u^{k+1}(t_{n-1}) \gets \max_{\omega \in U} \left[ H(X(t_{n-1}), P(t_{n-1}), \omega)\right.$ \\
    \qquad\qquad\qquad\qquad\qquad\qquad\qquad
    $\left.- \frac{1}{2\tau} \| \omega - u^k(t_{n-1})\|_2^2 \right]$\;
}
If needed, update $(\gamma_i^j)_{i=1,\ldots,M}^{j=1,\ldots,N}$\;
}
\end{algorithm}

\subsection{The Classification task}
\textbf{The training data:} We generate two classes by uniformly sampling $M = 200$ data points in the domain $[0,1]^2$ excluding a separation region (shaded yellow in Fig.~\ref{fig:comparison_plots}). These data points are labeled according to their positions ($y=1=$ `above' or $y=-1=$ `below') w.r.t.\ a predetermined nonlinear boundary (depicted as a yellow line in Fig.~\ref{fig:comparison_plots}).
The variable $y$ introduced in \eqref{eq:roboptim} denotes the label associated with each data point. Since this variable is incorporated in the cost $g_i$ for every datum $i$, it does not explicitly appear in Algorithm~\ref{alg:shooting}.

\vspace{5pt}

\textbf{The network:} We employ a residual neural network of $20$ layers, which corresponds to a discretization of the interval $[0,T]$ with $T=1$ and $\Delta t= 0.05$. The network is interpreted as an explicit Euler discretization of the $2d$-controlled ODE
\begin{equation} \label{eq:nODE_exp}
\begin{cases}
    \dot{x}(t) = F(x(t),u(t))= \sigma\big(W(t)x(t) + b(t)\big),\\
    x(0)= \xi^0.
\end{cases}
\end{equation}
Here, $\sigma$ denotes an activation function acting component-wise, set to be the hyperbolic tangent in our experiments. The state-space of this system is $\R^2$, while the control variable is $u(t)=(W(t),b(t))\in\R^{2\times2}\times\R^2$. Here, given an admissible control $t\mapsto (W(t),b(t))$, evaluating the neural ODE written above on our dataset means solving \eqref{eq:nODE_exp} with $\xi^0=x_i^0 + \alpha_i^j$, with $i=1,\ldots,M$ and  $j=1,\ldots,N$.

\vspace{5pt}

\textbf{The adversarial budget:} We fix $N = 4$ equidistant perturbations with adversarial budget $\epsilon = 0.02$ around each training point. This budget is chosen sensibly with the class separation region (yellow in Fig.~\ref{fig:comparison_plots}) in mind: The perturbations may populate the region but an unwanted overlap of opposite labels is prevented. We remark that in high-dimensional implementations, such as image classification, the attack's budget is typically assumed to be very small compared to the class distance, so that the labels 
are unaffected.

\vspace{5pt}

\textbf{The cost function:}
As loss functions $g_i$, we use a cost that promotes separation and clustering of the two classes. In ML, it is typical to append a trainable linear layer at the end of the network to project the classifier's output, and then to employ a penalization on the mismatch. However, in order to maintain an optimal control formulation and for more accurate comparison of the weight choices below, we prefer to incorporate a fixed projection within the loss function. Consequently, we use the following cost:
\begin{equation*}
g_i(x_i^{j}(T))  = 
    \begin{cases}
        e^{v\cdot x_i^{j}(T)} 
        & y_i=1,\\
        e^{-v\cdot x_i^{j}(T)} 
        &   y_i=-1,\\
    \end{cases}
\end{equation*}
where $v=(1,-1)$. It is important to note that while our method is applicable to any type of cost function, including quadratic or more complex formulations, we have chosen the above expression for visualization purposes.

\vspace{5pt}

\textbf{The training:} In Algorithm~\ref{alg:shooting}, we provide the shooting method used for the numerical solution of the PMP, and hence for training the network. The connection between the shooting method and the training of neural networks with gradient descent has been investigated in  \cite{LCCE19}, \cite{Benning_dl_as_oc}. Notice that the extra term in the Hamiltonian (at line $17$) comes from \cite{Sakawa_80} for stability reasons. We set the initial guess $u^0$ by randomly generating standard Gaussian values for each entry and for each time-node, while the number of shooting iterations is $\mathrm{iter\_max} = 1000$. It is worth noting that the maximization step in line $16$ of Algorithm \ref{alg:shooting} can be performed in various ways, such as gradient ascent. However, we opt for the approach used in \cite{CFS_23}, which employs a fixed-point method to find the point where the gradient of the augmented Hamiltonian vanishes.

\subsection{The choice of the weights}
One crucial challenge is that the PMP derived in Theorem~\ref{thm:pmpfinite} does not specify how to choose the weights $\gamma^j_i$ at line $19$ of Algorithm~\ref{alg:shooting}. Therefore, we compare three  options.

\vspace{5pt}

\textbf{Uniform robust:} The first approach involves assigning uniform weights to all perturbations. In this method, we consider the perturbations as training data points and minimize the loss based on them. However, this straightforward approach is not connected to a minimax problem and does not align with the theoretical framework, where the weights play a specific role in emphasizing the perturbations that achieve the maximum.

\vspace{5pt}

\textbf{Weighted robust:} The second option is to introduce the following weight functions for the perturbations:
\begin{equation}\label{eq:weights}
    \gamma_i^j= \frac{1}{C_i}e^{c g_i(x^j_i(T)) } \quad \text{with } C_i= \sum_{j=1}^N e^{c g_i(x^j_i(T))}, 
\end{equation}
where $c >0$ is a constant chosen a priori.
This choice is inspired by Laplace's principle \cite{Laplace2} and it is reminiscent of Gibbs measures. 
These weights must be updated each time the control is adjusted, and they are determined based on the cost $g_i$ of the network's output $x_i^j(T)$, which we aim to maximize across perturbations. Subsequently, these weights are normalized according to the cost achieved by the other perturbations. The benefit of this approach is that if two perturbations (almost) achieve the maximum, they will have (almost) equal weights, leading to a more stable method. This peculiarity stands in contrast to the instability of the worst-case approach described below. The constant $c$ needs to be chosen appropriately. Setting it to zero (or too small) is equivalent to the uniform weighting method described above. Similar considerations have already been done in \cite{Li_tilted}, motivated by the relaxation of the max function by the LogSumExp.

\vspace{5pt}

\textbf{Worst-case robust:} 
The third approach consists of explicitly computing the worst-case scenario at each step --- which involves identifying for every datum the perturbation that achieves the maximum --- and in updating the controls using exclusively these data-points. During the execution of the shooting method, the worst-case perturbations are typically different at each iteration. As a matter of fact, the cost is highly oscillating along the iterative steps, since improving the performance on the worst-cases might result in deteriorating the behavior of the others. At each iteration of the shooting method, this approach is equivalent to set the coefficients $(\gamma_i^j)_{i=1,\ldots,M}^{j=1,\ldots,N}$ according to \eqref{eq:weights}, and then to send $c \to \infty$.

\subsection{Results}
In Fig.~\ref{fig:comparison_plots} we display the classification prediction probabilities of the trained models. 
We compare all the four methods described above, namely the non-robust approach using the unperturbed set of training data, the uniform robust method, the weighted robust approach with weights as \eqref{eq:weights} with $c=100$ and finally the worst-case robust method.
\begin{figure}[b]
\includegraphics[trim=120 120 120 100, clip, scale=0.26]{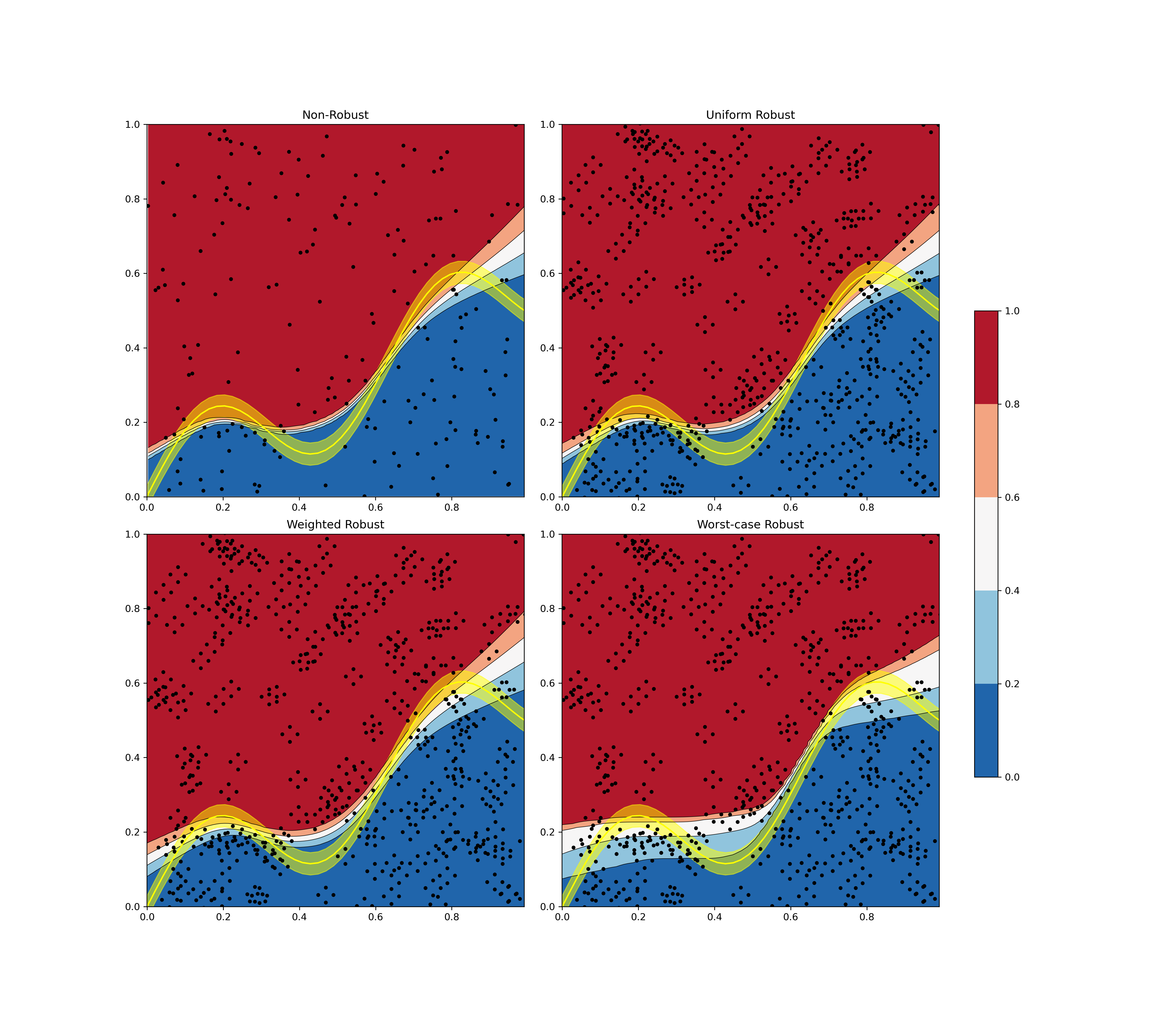}
\caption{Classification level-sets on $[0,1]^2$: the color bar indicates the confidence of prediction of one class (red above the yellow margin) or the other class (blue below the yellow margin).}\label{fig:comparison_plots}
\end{figure}
For the robust methods, as training input, we use the perturbed version of the data employed in the non-robust method, i.e. $M \times N = 800$ training points, with unchanged parameter initialization. Upon initial visual analysis, it is apparent that while the robust methods may not significantly outperform the non-robust method in precisely identifying the decision boundary, they excel in recognizing areas with higher risk of misclassification and making more cautious predictions. To provide a more quantitative comparison of all methods, we present in Table~\ref{table:comparison} several metrics obtained by averaging over multiple experiments. These metrics are: test accuracy (on $[0,1]^2$), classification margin accuracy of the excluded margin of unperturbed training points, and high-confidence mistakes, which determine the share of all misclassifications in $[0,1]^2$ with prediction confidence of more than $70\%$. This last metric highlights the algorithm's capability to identify areas of ambiguity.
\begin{table}[t]
\scriptsize
\begin{tabular}{l c c c}
\\
\hline
\, & \textbf{Test Accuracy} & \textbf{Margin Accuracy} & \textbf{High-confidence}\\
& \, & \, & \textbf{ Mistakes}\\
\hline
Non-robust & 94,12\,\% & 56,52\,\% & 78,50\,\%\\
\hline
Uniform robust & 94,26\,\% & 56,80\,\% & 72,57\,\%\\
\hline
Weighted robust & 94,12\,\% & 57,14\,\% & 65,13\,\% \\
\hline
Worst-case robust & 94,68\,\% & 58,31\,\% & 55,76\,\%\\
\hline
\end{tabular}
\caption{\vspace{-0.7cm}}\label{table:comparison}
\end{table}
Finally, we provide a comparison of robustness in Fig.~\ref{fig:semilog_loss}, where we examine the value of $J$ as defined in \eqref{eq:def_funct}. To achieve robustness, the objective is to minimize this value, motivated by the minimax formulation \eqref{eq:roboptim}. The key observation here is that the worst-case robust method achieves the lowest loss, which aligns with the results presented in Table \ref{table:comparison}. However, this method exhibits considerable oscillations due to the instability of the maxima. Indeed, when handling scenarios involving multiple perturbations that are close to the maximum, the worst-case method would select a single perturbation and adjust the parameters accordingly. This approach may favor one direction while neglecting the others. In contrast, the weighted robust method considers the influence of all perturbations. This is confirmed by the plot in Fig.~\ref{fig:semilog_loss}: our proposed weighted robust approach generally exhibits slightly reduced robustness compared to the worst-case robust method, but it offers more  stability during training.
\begin{figure}[b]
\centering
\includegraphics[scale=0.35, trim = {0 0 0 1cm}, clip]{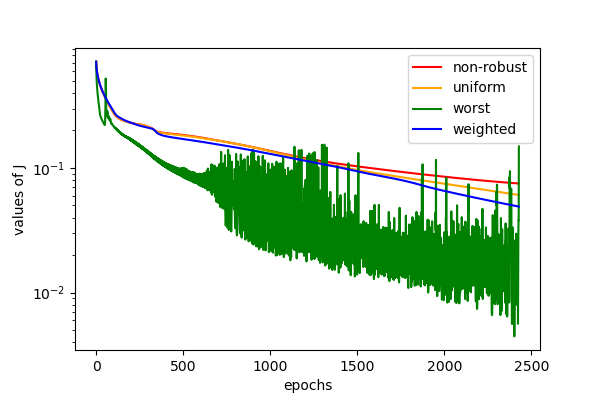}
\caption{Robustness measure displayed on a semilogarithmic scale.}\label{fig:semilog_loss}
\end{figure}

\section{CONCLUSIONS}
In this work, we investigate the minimax optimal control problem of neural ODEs in order to improve adversarially robust training of deep neural networks. In the continuous ODE setting, we provide a proof of the first order optimality conditions in form of the PMP. 
Inspired by these conditions, we present a numerical scheme for training neural networks that incorporates weighted adversarial attacks for each training point. This method achieves promising results in low dimensions: it surpasses the uniform robust method in terms of accuracy and exhibits greater stability in terms of training behavior compared to the worst-case approach. 
Our novel approach has potential for higher dimensional settings, where the precise computation of the local maximum is infeasible, but a weighted approximation could improve robustness.





\section*{ACKNOWLEDGMENT}

C.C.\ is partially supported by 
the DFG SPP 2298. A.S.\ is partially supported by INdAM-GNAMPA. T.W.\ is supported by the Austrian Science Fund (FWF) grant no.~J 4681-N.
Finally, the authors want to thank Prof. Franco Rampazzo.

\end{document}